\newtheorem{theorem}{Theorem}[section]
\newtheorem{lemma}[theorem]{Lemma}
\newtheorem{corollary}[theorem]{Corollary}
\theoremstyle{definition}
\newtheorem{definition}[theorem]{Definition}
\newtheorem{example}[theorem]{Example}
\newtheorem{fact}[theorem]{Fact}
\theoremstyle{remark}
\newtheorem{remark}[theorem]{Remark}
\numberwithin{equation}{section}
\newcommand{\reals}{\mathbb{R}}
\newcommand{\complex}{\mathbb{C}}
\newcommand{\field}{\mathbb{F}}
\begin{document}
\setcounter{page}{1}

\title[Schur stability of matrices]{Linear maps on $M_n(\reals)$ preserving Schur stable matrices}
\author[Chandrashekaran]{Chandrashekaran Arumugasamy}

\address{Department of Mathematics\\ 
School of Mathematics and Computer Sciences\\
Central University of Tamil Nadu\\
Neelakudi, Kangalancherry\\
Thiruvarur - 610 101, Tamilnadu, India}
\email{chandrashekaran@cutn.ac.in, chandru1782@gmail.com}

\author[Sachindranath]{Sachindranath Jayaraman}
\address{School of Mathematics\\ 
Indian Institute of Science Education and Research - Thiruvananthapuram\\ 
Maruthamala P.O., Vithura\\ Thiruvananthapuram -- 695 551, Kerala, India}
\email{sachindranathj@gmail.com, sachindranathj@iisertvm.ac.in}

\subjclass[2010]{15A86, 15B99}

\keywords{Schur stable matrices, linear preserver problems, spectral radius preservers, unitary 
similarity/congruent invariant norm preservers, linear complementarity problems}

\begin{abstract}
An $n \times n$ matrix $A$ with real entries is said to be Schur stable if all the eigenvalues 
of $A$ are inside the open unit disc. We investigate the structure of linear maps on $M_n(\reals)$ 
that preserve the collection $\mathcal{S}$ of Schur stable matrices. We prove that if $L$ is a linear map 
such that $L(\mathcal{S}) \subseteq \mathcal{S}$, then $\rho(L)$ (the spectral radius of $L$) is atmost $1$ 
and when $L(\mathcal{S}) = \mathcal{S}$, we have $\rho(L) = 1$. In the latter case, the map $L$ preserves 
the spectral radius function and using this, we characterize such maps on both $M_n(\reals)$ as well as on 
$\mathcal{S}^n$.
\end{abstract}
	
\maketitle

\section{Introduction, Notations and Preliminaries}
We work with the field $\reals$ of real numbers throughout. The vector space of all $n \times n$ matrices 
with real entries will be denoted by $M_n(\reals)$. The subspace of real symmetric matrices will be denoted 
by $\mathcal{S}^n$. For $A \in M_n(\reals)$, the spectrum of $A$ will be denoted by $\sigma(A)$ and the 
spectral radius of $A$, denoted by $\rho(A)$, is the number $\displaystyle \max_{\lambda \in \sigma(A)} |\lambda|$. 
We denote the operator norm or the spectral norm of $A$ by $||A||:= \displaystyle \sup_{||x||_2 = 1} ||Ax||_2$, 
where $||.||_2$ denotes the Euclidean norm of an element of $\reals^n$. It is well known that the the operator norm 
as well as the spectral radius are unitarily invariant on $M_n(\reals)$. Moreover, unlike the Frobenius norm, 
the operator norm is not induced by an inner product. One may refer to either \cite{hj-1} 
or \cite{z2} for results on matrix theory.

\begin{definition}\label{defn-1}
An $n \times n$ matrix $A$ with real or complex entries is said to be Schur stable if all the eigenvalues 
of $A$ are inside the open unit disc. 
\end{definition}

The following result concerning stability of the dynamical system $x_{k+1} = Ax_k, \ A \in M_n(\reals)$ 
is well known. The matrix $A$ is Schur stable if and only if for every initial condition $x_0 \in \reals^n, \ 
\displaystyle \lim_{k \to \infty} x_k = 0$. This is also equivalent to $\displaystyle \lim_{k \to \infty} A^k = 0$. 
The proof of the above result can be found in Chapter 2 of \cite{hj-2}. Schur stability of $A$ can also be 
formulated in terms of positive definite solutions $X$ and $R$ to the discrete-time Lyapunov equation:
\begin{equation*}
X - A^tXA - R = 0. 
\end{equation*}

\begin{theorem}\label{thm-folklore-1}
Let $A \in M_n(\reals)$. Then, the following statements are equivalent:
\begin{enumerate}
\item $A$ is Schur stable.
\item For each symmetric positive definite matrix $R$, there exists a positive definite matrix $X$ such 
that $X - A^tXA - R = 0$.
\item For each symmetric matrix $Q$, the following optimization problem has a solution: Find a symmetric 
positive definite matrix $X$ such that $Y:= X - A^tXA + Q$ is symmetric positive definite and $trace(YX) = 0$.
\end{enumerate}
\end{theorem}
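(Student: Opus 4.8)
The plan is to establish the equivalence of the first two statements by the classical discrete-time Lyapunov (Stein) argument, and then to splice in the third statement by proving $(3) \Rightarrow (2)$ and $(1) \Rightarrow (3)$. Throughout I would read the positivity requirements in statement (3) in the semidefinite sense forced by the complementarity condition: since $\operatorname{trace}(YX) = 0$ with $X$ and $Y$ both positive definite is impossible, the natural reading is that (3) is the semidefinite linear complementarity problem for the Stein transformation $S(X) := X - A^t X A$, namely, find $X \succeq 0$ with $Y := S(X) + Q \succeq 0$ and $\operatorname{trace}(YX) = 0$.

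For $(1) \Rightarrow (2)$ I would exhibit the solution explicitly. Fixing $R \succ 0$, set $X := \sum_{k=0}^{\infty} (A^t)^k R A^k$. Since $\rho(A) < 1$, choosing any $r$ with $\rho(A) < r < 1$ gives $\|A^k\| \le C r^k$ for some constant $C$, so the series converges absolutely; a term-by-term shift shows $A^t X A = \sum_{k \ge 1} (A^t)^k R A^k = X - R$, i.e. $X - A^t X A = R$. Positive definiteness is immediate from $v^t X v = \sum_{k \ge 0} (A^k v)^t R (A^k v) \ge v^t R v > 0$ for $v \ne 0$. For the converse $(2) \Rightarrow (1)$, I would use a single eigenpair: if $Av = \lambda v$ with $0 \ne v \in \complex^n$, then, since $A$ is real (so $A^{*} = A^t$), applying the Hermitian form of $X - A^t X A = R$ to $v$ and using $(Av)^{*} X (Av) = |\lambda|^2 v^{*} X v$ yields $(1 - |\lambda|^2)\, v^{*} X v = v^{*} R v > 0$; as $v^{*} X v > 0$, this forces $|\lambda| < 1$ for every eigenvalue, so $A$ is Schur stable.

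To connect statement (3) I would first dispose of $(3) \Rightarrow (2)$ cheaply. Given $R \succ 0$, apply (3) with $Q = -R$ to obtain $X \succeq 0$ with $Y = X - A^t X A - R \succeq 0$ and $\operatorname{trace}(YX) = 0$. Rewriting $X = Y + A^t X A + R$ and using $Y \succeq 0$ and $A^t X A \succeq 0$ gives $X \succeq R \succ 0$, so in fact $X \succ 0$; then $\operatorname{trace}(YX) = 0$ with $X \succ 0$ and $Y \succeq 0$ forces $Y = 0$, which is exactly the Stein equation $X - A^t X A = R$. For the forward direction $(1) \Rightarrow (3)$ the two definite cases are easy: if $Q \succeq 0$ take $X = 0$ (then $Y = Q \succeq 0$ and $\operatorname{trace}(YX) = 0$), while if $Q \prec 0$ apply (2) with $R = -Q$ to get $X \succ 0$ with $S(X) = -Q$, whence $Y = 0$ and complementarity holds trivially.

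The main obstacle is the remaining case of $(1) \Rightarrow (3)$ for an indefinite $Q$, where a genuine complementarity solution is needed and neither $X = 0$ nor the Stein equation produces it directly. Here I would appeal to the semidefinite complementarity theory of the Stein transformation. The operator $S(X) = X - A^t X A$ is invertible precisely when no product of two eigenvalues of $A$ equals $1$, which holds under Schur stability, and its inverse $S^{-1}(R) = \sum_{k \ge 0} (A^t)^k R A^k$ visibly maps the positive semidefinite cone into itself; this cone positivity is the structural fact underlying solvability. Concretely, I would invoke that Schur stability of $A$ (equivalently of $A^t$) endows $S$ with the P-property, and that the P-property implies the Q-property, i.e. solvability of the above complementarity problem for every symmetric $Q$ (results of Gowda--Song type in the linear complementarity literature). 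Proving the P-property of the Stein transformation from $\rho(A) < 1$, together with the implication from the P-property to the Q-property for the semidefinite cone, is the technical heart, and I expect it to be the step requiring the most care; if a self-contained argument is wanted I would build it from the cone-preserving property of $S^{-1}$ together with a degree- or homotopy-theoretic solvability criterion for the associated complementarity map.
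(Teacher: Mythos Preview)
The paper does not prove this theorem at all: immediately after the statement it remarks that the third item is the semidefinite linear complementarity problem and that ``the proof of Theorem~\ref{thm-folklore-1}, along with other equivalent statements can be found in Theorem~11, \cite{gp}'' (Gowda--Parthasarathy). So there is no in-paper argument to compare against; the only benchmark is the cited reference.

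Your proposal is in fact aligned with that reference. The equivalence $(1)\Leftrightarrow(2)$ via the convergent series $X=\sum_{k\ge 0}(A^t)^kRA^k$ and the eigenvector computation is the standard Stein argument, and it is correct. Your reading of (3) as the semidefinite LCP for the Stein transformation $S(X)=X-A^tXA$ is exactly the intended one (and you are right that the literal ``positive definite'' wording in the paper is untenable, since $X\succ 0$ and $Y\succ 0$ force $\operatorname{trace}(YX)>0$); the paper itself identifies (3) with the SDLCP of \cite{ksh}. Your $(3)\Rightarrow(2)$ step, specializing to $Q=-R$ and squeezing out $X\succeq R\succ 0$ hence $Y=0$, is clean and correct. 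For $(1)\Rightarrow(3)$ in the indefinite case you invoke that Schur stability gives the Stein map the $\mathbf{P}$-property and hence the $\mathbf{Q}$-property on the semidefinite cone; this is precisely the content established in \cite{gp}, so your plan coincides with what the paper is citing rather than diverging from it. The only caveat is that a fully self-contained proof of ``$\mathbf{P}\Rightarrow\mathbf{Q}$ on $\mathcal{S}^n_+$'' is nontrivial, so if you intend to avoid citing \cite{gp} you should be prepared to supply that argument (your sketch via cone-positivity of $S^{-1}$ plus a degree/homotopy criterion is the right shape, but would need to be fleshed out).
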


A comprehensive treatment of Schur stability as well as two of its variants, namely, Schur $\mathbb{D}$ 
stability and diagonal stability, can be found in Chapter 2 of the book by Kaskurewicz and Bhaya \cite{kb}. 
The problem in the third statement of Theorem \ref{thm-folklore-1} is called the semidefinite linear 
complementarity problem and originally considered in \cite{ksh} as an optimization problem. The proof of 
Theorem \ref{thm-folklore-1}, along with other equivalent statements can be found in Theorem 11, \cite{gp}.

For a field $\field$ and the set $M_{m,n}(\field)$ of $m \times n$ matrices over $\field$, 
a linear preserver $\phi$ is a linear map $\phi : M_{m,n}(\field) \longrightarrow M_{m,n}(\field)$ 
that preserves a certain property or a relation. Most such maps are of the form $\phi(A) = MAN$ for some 
invertible matrices $M$ and $N$ of orders $m \times m$ and $n \times n$, respectively, or $m = n$ and 
$\phi(A) = MA^tN$ for some invertible matrices $M$ and $N$ of order $n \times n$. Both these maps are called 
standard maps. The first such problem was studied by Frobenius, who proved that any determinant preserver 
is of the form $MAN$ or $MA^tN$ with $det(MN) = 1$. Other properties of matrices such as rank, inertia, 
invertibility, functions of eigenvalues and so on, were investigated later on. For instance, rank preservers on 
the space of complex as well as real matrices are of the above form. For general techniques on linear preserver 
problems, one may refer to \cite{lp} or \cite{lt} and the references cited therein. There are two types of preserver 
problems one usually considers. Given a subset $\mathcal{S}$ of $M_{m,n}(\field)$, what are the linear maps $\phi$ on 
$M_{m,n}(\field)$ such that $\phi(\mathcal{S}) \subseteq \mathcal{S}$. Such a map is called an into preserver. 
The other one is to characterize those linear maps $\phi$ on $M_{m,n}(\field)$ such that $\phi(\mathcal{S}) = 
\mathcal{S}$. Maps of this type are called onto preservers. It is known that when $M_n(\reals)$ has a basis 
consisting of elements of $\mathcal{S}$, then onto preservers are precisely those into preservers that are 
invertible and the inverse being an into preserver (see \cite{d} for details).  

Let $\mathcal{S}$ be the collection of real matrices that are Schur stable. Our objective is to study linear 
preservers of Schur stability on the vector spaces $M_n(\reals)$ and $\mathcal{S}^n$. We prove that if $L$ is an 
invertible linear map on $M_n(\reals)$ that preserves $\mathcal{S}$, then $\rho(L) \leq 1$, although the converse 
is not true (Theorem \ref{thm-3}). Equality holds in the case of an onto preserver (Theorem \ref{thm-4}). In this 
case, the map $L$ preserves the spectral radius function as well. We completely characterize onto preservers on 
$M_n(\reals)$ as well as on $\mathcal{S}^n$ (Corollary \ref{cor-2} and Remarks \ref{rem-2}). Instances when 
maps with $\rho(L) \leq 1$ preserve $\mathcal{S}$ are also discussed (Theorem \ref{thm-2}) and examples are 
presented to illustrate or substantiate our results.

Before proceeding further, it must be pointed out that linear maps on $M_n(\complex)$ that preserve regional 
eigenvalue locations was studied by Johnson et al \cite{jlrsp}. Let $H(r,s,t)$ denote the collection of complex 
matrices with $r$ eigenvalues inside the unit disc, $s$ eigenvalues outside the unit disc and $t$ eigenvalues on 
the unit circle. Theorem $4.1$ of \cite{jlrsp} says the following.

\begin{theorem} \label{thm-jlrsp}
Let $T$ be an invertible linear map on $M_n(\complex)$ such that $T(H(r,s,t)) \subseteq H(r,s,t)$. Assume that 
$r = n, s = t = 0$. Then, there exists $\alpha, \beta \in \complex$ such that $\beta \neq 0, \ \alpha n + \beta \neq 0, 
\ (n-1) |\alpha| + |\beta + \alpha| \leq 1$ and an invertible $S \in M_n(\complex)$ such that 
$T(A) = \alpha (trace A) I + \beta S^{-1}AS$ for all $A \in M_n(\complex)$\\
or \\
$T(A) = \alpha (trace A) I + \beta S^{-1}A^tS$ for all $A \in M_n(\complex)$.\\
Conversely, if $T$ is given by either of the above representations with at $\beta \neq 0, \ \alpha n + \beta \neq 0, 
\ (n-1) |\alpha| + |\beta + \alpha| \leq 1$, then $T$ is nonsingular and $T(H(n,0,0)) \subseteq H(n,0,0)$.
\end{theorem}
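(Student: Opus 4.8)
The plan is to convert the set-containment hypothesis into a statement about the spectral radius and then exploit the resulting rigidity on nilpotents. Since $H(n,0,0) = \{A : \rho(A) < 1\}$ and this set is invariant under positive scaling, the first step is to show that $T$ is spectral-radius non-increasing: given $A$ with $\rho(A) = r > 0$, the matrix $A/(r+\varepsilon)$ lies in $H(n,0,0)$ for every $\varepsilon > 0$, so $\rho(T(A))/(r+\varepsilon) < 1$; letting $\varepsilon \to 0$ gives $\rho(T(A)) \le \rho(A)$. Applying this to a nilpotent $N$ (for which $\rho(tN) = 0$ for all $t$) forces $\rho(T(N)) = 0$, i.e. $T$ maps every nilpotent matrix to a nilpotent matrix. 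Because $T$ is invertible and the nilpotent matrices form an irreducible variety of dimension $n^2 - n$, the image $T(\mathrm{Nil})$ is a variety of the same dimension contained in $\mathrm{Nil}$, hence equals $\mathrm{Nil}$; thus $T$ is a linear bijection preserving the nilpotents.

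Next I would invoke the classification of invertible linear maps that preserve the nilpotent matrices (Botta--Pierce--Watkins): such a map has the form $T(A) = \beta\, U A U^{-1} + \phi(A) I$ or $T(A) = \beta\, U A^t U^{-1} + \phi(A) I$ for some nonzero $\beta$, invertible $U$, and linear functional $\phi$. Evaluating the nilpotency constraint $\operatorname{tr}(T(N)) = 0$ and using that the nilpotents span the trace-zero subspace forces $\phi$ to vanish on trace-zero matrices, so $\phi(A) = \alpha\, \operatorname{tr}(A)$ for a scalar $\alpha$. Absorbing $U$ into $S = U^{-1}$ puts $T$ into exactly the advertised form $T(A) = \alpha(\operatorname{tr} A) I + \beta S^{-1} A S$, or its transpose analogue.

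It remains to pin down the constraints on $\alpha, \beta$. Since conjugation by a fixed $S$ is a similarity on the operator algebra $\mathrm{End}(M_n(\complex))$ and does not change the spectrum of $T$, I would reduce to $S = I$ and compute the eigenvalues of $A \mapsto \beta A + \alpha(\operatorname{tr} A) I$ directly: it acts as $\beta$ on the $(n^2-1)$-dimensional trace-zero subspace and as $\beta + \alpha n$ on the line spanned by $I$. Hence $T$ is invertible precisely when $\beta \neq 0$ and $\alpha n + \beta \neq 0$. For the norm-type inequality, if $A$ has eigenvalues $\lambda_1, \dots, \lambda_n$ (with multiplicity) then the eigenvalues of $T(A)$ are $\mu_k = (\alpha + \beta)\lambda_k + \alpha \sum_{j \neq k}\lambda_j$; the containment $T(\{\rho \le 1\}) \subseteq \{\rho \le 1\}$ is therefore equivalent to $\max_k |\mu_k| \le 1$ over the closed polydisc $|\lambda_j| \le 1$, whose maximum is attained by aligning phases and equals $|\alpha + \beta| + (n-1)|\alpha|$. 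This yields the stated inequality $(n-1)|\alpha| + |\beta + \alpha| \le 1$.

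For the converse, the same eigenvalue formula gives, whenever $|\lambda_j| < 1$ for all $j$, the strict bound $|\mu_k| \le |\alpha+\beta|\,|\lambda_k| + |\alpha|\sum_{j\neq k}|\lambda_j| < |\alpha + \beta| + (n-1)|\alpha| \le 1$, so $T(A) \in H(n,0,0)$; invertibility under $\beta \ne 0$, $\alpha n + \beta \ne 0$ was already recorded. The transpose representation is handled identically because $A^t$ and $A$ share their spectrum. I expect the genuine obstacle to be the middle step: establishing the standard form, which rests on the nilpotent-preserver classification --- if that theorem may be quoted the argument is short, but proving it from scratch (showing a nilpotent-preserving bijection is a scalar multiple of a Jordan automorphism modulo a trace term) is the substantial part. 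A secondary point requiring care is the passage between the open region $\rho < 1$ and its closure $\rho \le 1$, namely verifying that the supremum defining the inequality is genuinely attained and that strictness is preserved in the converse.
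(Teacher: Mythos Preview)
The paper does not prove this theorem at all: it is quoted verbatim as Theorem~4.1 of \cite{jlrsp} and used only as background and motivation. So there is no ``paper's own proof'' to compare against.

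That said, your outline is a sound and standard route to the result. The reduction $\rho(T(A))\le\rho(A)$ from the open containment, the consequence that $T$ sends nilpotents to nilpotents bijectively, and the appeal to the Botta--Pierce--Watkins classification to obtain the form $\alpha(\operatorname{tr}A)I+\beta S^{-1}AS$ (or its transpose version) are exactly the steps one expects, and your identification of the eigenvalues $\mu_k=(\alpha+\beta)\lambda_k+\alpha\sum_{j\ne k}\lambda_j$ together with the polydisc maximization correctly yields the constraint $(n-1)|\alpha|+|\alpha+\beta|\le 1$. Your own caveats are the right ones: the substantive content is entirely in the nilpotent-preserver theorem, and the open/closed passage needs the easy continuity argument (if $\rho(A)\le 1$ then $(1-\tfrac1m)A\in H(n,0,0)$, hence $\rho(T(A))\le 1$). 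One small point worth making explicit in the converse: the strict inequality $|\mu_k|<1$ when all $|\lambda_j|<1$ relies on at least one of $|\alpha+\beta|$, $|\alpha|$ being nonzero, which is guaranteed by $\beta\ne 0$; you implicitly use this but do not say it.
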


As pointed in the remark after Theorem $4.1$ in \cite{jlrsp}, nonsingularity of the map $T$ is essential. We 
therefore make this assumption henceforth in this manuscript. Since our motivation to study linear preservers of 
Schur stable matrices stems from Theorem \ref{thm-folklore-1}, we focus on the case $M_n(\reals)$ alone. We end this 
section by pointing out that results in \cite{jlrsp} do not verbatim carry over the real field. However, some of our 
results are also valid for mpas on $M_n(\complex)$, too as the proofs would indicate.

\section{Main Results}
We present the main results in this section. The following are some well known facts that will be used in this paper.

\begin{fact}\label{fact-0}
The following hold for $A, B \in M_n(\reals)$.
\begin{enumerate}
\item[1] $\rho(AB) = \rho(BA)$.
\item[2] If $U \in M_n(\reals)$ is orthogonal, then $\rho(UAU^t) = \rho(A)$.
\item[3] If $U \in M_n(\reals)$ is orthogonal and if $A \in M_n(\reals)$ is Schur stable, so is $UAU^t$.
\item[4] If $A$ and $B$ commute, then $\rho(AB) \leq \rho(A) \rho(B)$.
\end{enumerate}
\end{fact}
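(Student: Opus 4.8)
The plan is to handle the four items in sequence, drawing on the standard fact that the spectral radius of a real matrix equals the maximum modulus of its eigenvalues computed over $\complex$.

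For item (1), I would recall the classical result that the products $AB$ and $BA$ share the same nonzero spectrum. Concretely, if $\lambda \neq 0$ satisfies $ABx = \lambda x$ for some $x \neq 0$, then $Bx \neq 0$ (otherwise $\lambda x = ABx = 0$) and $BA(Bx) = B(ABx) = \lambda (Bx)$, so $\lambda \in \sigma(BA)$; the symmetric argument gives the reverse inclusion, and $0$ is an eigenvalue of one product exactly when it is of the other. Since $\rho$ depends only on the set of eigenvalues, $\rho(AB) = \rho(BA)$ follows at once.

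For items (2) and (3), the key observation is that when $U$ is orthogonal we have $U^t = U^{-1}$, so $UAU^t = UAU^{-1}$ is \emph{similar} to $A$. Similar matrices have identical spectra, hence $\sigma(UAU^t) = \sigma(A)$. Item (2) is then immediate from the definition of $\rho$, and item (3) follows because Schur stability is a condition purely on the location of the eigenvalues: if every eigenvalue of $A$ lies in the open unit disc, then so does every eigenvalue of $UAU^t$, these being the same numbers.

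The substantive step is item (4). Here I would invoke simultaneous triangularization of commuting matrices: if $AB = BA$, then over $\complex$ there is an invertible $S$ such that $S^{-1}AS$ and $S^{-1}BS$ are both upper triangular, with diagonal entries $\alpha_1, \dots, \alpha_n$ and $\beta_1, \dots, \beta_n$ respectively, listed so that the eigenvalues are paired in a common order. Then $S^{-1}(AB)S = (S^{-1}AS)(S^{-1}BS)$ is upper triangular with diagonal entries $\alpha_i \beta_i$, which are therefore precisely the eigenvalues of $AB$. Consequently
\[
\rho(AB) = \max_i |\alpha_i \beta_i| \leq \left(\max_i |\alpha_i|\right)\left(\max_j |\beta_j|\right) = \rho(A)\,\rho(B).
\]
The main obstacle is guaranteeing that the diagonal entries of the two triangularized matrices are paired correctly; this is exactly what commutativity provides, and it is the sole place where the hypothesis $AB = BA$ enters. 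I expect item (4) to be the only one requiring more than a line of justification.
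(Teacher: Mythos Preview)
Your arguments are correct and are the standard textbook justifications for these facts. The paper itself offers no proof at all: Fact~\ref{fact-0} is simply announced as ``well known facts that will be used in this paper'' and left unproved. So there is nothing to compare against, and your write-up supplies exactly the kind of elementary verification one would expect; in particular, your use of simultaneous (upper-)triangularization for item~(4) is the canonical route and correctly identifies commutativity as the sole ingredient needed to pair the eigenvalues.
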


The following lemma gives an easy criterion to check Schur stability of $2 \times 2$ matrices.

\begin{lemma}\label{lem-0}
(Lemma 2.7.16, \cite{kb}) $A \in M_2(\reals)$ is Schur stable if and only if $|trace(A)| < 1 + det(A) \ \mbox{and} \ 
|det(A)| < 1$. 
\end{lemma}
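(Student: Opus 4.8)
The plan is to analyze the characteristic polynomial of $A$ directly, since for a $2 \times 2$ matrix it is completely determined by the two scalars appearing in the statement. Writing $t = trace(A)$ and $d = det(A)$, the characteristic polynomial is $p(\lambda) = \lambda^2 - t\lambda + d$, and Schur stability of $A$ is by definition the assertion that both roots $\lambda_1, \lambda_2$ of $p$ lie in the open unit disc. I would split the argument according to the sign of the discriminant $t^2 - 4d$, which governs whether the eigenvalues form a complex conjugate pair or a pair of real numbers, and prove the biconditional separately in each case.

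In the complex case $t^2 - 4d < 0$, the eigenvalues are $\lambda = \frac{t \pm i\sqrt{4d - t^2}}{2}$, so a direct computation gives $|\lambda|^2 = \lambda \overline{\lambda} = d$. Hence both eigenvalues have modulus less than $1$ precisely when $d < 1$; moreover $d > t^2/4 \geq 0$ forces $d > 0$, so this is the same as $|det(A)| < 1$. It then remains to observe that the trace condition is automatic here: by the arithmetic-geometric mean inequality $1 + d > 2\sqrt{d} > |t|$ whenever $0 < d < 1$, so $|t| < 1 + d$ holds for free. This settles the equivalence in the complex case.

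In the real case $t^2 - 4d \geq 0$ I would exploit the boundary values $p(1) = 1 - t + d$ and $p(-1) = 1 + t + d$ together with the product relation $\lambda_1 \lambda_2 = d$. The condition $|t| < 1 + d$ is exactly the pair of inequalities $p(1) > 0$ and $p(-1) > 0$. Since $p$ is an upward-opening parabola with real roots, $p(1) > 0$ forces both roots to lie below $1$ or both above $1$, while $p(-1) > 0$ forces both above $-1$ or both below $-1$. The crucial point is that the determinant condition $|d| < 1$ eliminates the two ``escaping'' configurations: if both roots lay on the same side outside $[-1,1]$, their product $d$ would satisfy $|d| > 1$. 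Hence the only surviving possibility is that both roots lie in $(-1,1)$, which is Schur stability, and reversing the implications gives the converse.

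I expect the main obstacle to be the bookkeeping in the real-eigenvalue case, namely verifying that $p(1) > 0$, $p(-1) > 0$, and $|d| < 1$ together pin down both roots inside $(-1,1)$ and simultaneously rule out the straddling and doubly-external configurations; this is precisely where the determinant bound, rather than the trace bound, does the essential work. By contrast, the complex case and the overall handling of strict versus non-strict inequalities should be routine once the discriminant split is in place.
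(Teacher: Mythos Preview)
Your argument is correct. The discriminant split is the natural move, and in each branch the computations go through: in the complex case $|\lambda|^2=d$ reduces everything to the determinant bound, with the trace bound following from $(1-\sqrt{d})^2>0$; in the real case the sign of $p(\pm 1)$ localizes the pair of roots relative to $\pm 1$, and the product bound $|d|<1$ then kills the two configurations with both roots outside $[-1,1]$. The straddling configurations you flag in the final paragraph are in fact already excluded by $p(1)>0$ or $p(-1)>0$ alone (if the parabola has one root on each side of $1$, then $p(1)<0$), so the determinant bound is needed only for the doubly-external case---but your bookkeeping covers this correctly either way.

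There is nothing to compare against in the paper itself: the lemma is quoted verbatim from \cite{kb} (Lemma~2.7.16) and no proof is supplied here. Your write-up therefore supplies a self-contained justification where the paper simply defers to the reference.
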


We begin with the following definition before proving our results. 

\begin{definition}\label{defn-2}
$A \in M_n(\field)$ (where $\field$ is either $\reals$ or $\complex$) is called normaloid if $\rho(A) = ||A||$ and 
spectraloid if $\rho(A) = w(A)$, where $w(A):= \displaystyle \sup_{||x|| = 1} \langle Ax,x \rangle$.
\end{definition}

It is clear that normal matrices are normaloid and the class of normaloid matrices is strictly contained in the 
class of spectraloid matrices. Before proving our main results, we observe the following. 

\begin{fact}\label{fact-1}
Let $L$ be a linear map such that $\rho(A) = \rho(L(A))$ for all $A \in M_n(\reals)$. Then, $L$ preserves Schur 
stability.
\end{fact}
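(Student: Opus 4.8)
The plan is to reduce Schur stability to a single scalar inequality involving the spectral radius, after which the conclusion follows directly from the hypothesis. By Definition \ref{defn-1}, a matrix $A \in M_n(\reals)$ is Schur stable precisely when every eigenvalue of $A$ lies in the open unit disc, which is equivalent to the condition $\max_{\lambda \in \sigma(A)} |\lambda| < 1$, that is, $\rho(A) < 1$. Thus membership in the collection $\mathcal{S}$ is characterized entirely by the single inequality $\rho(A) < 1$.

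First I would fix an arbitrary $A \in \mathcal{S}$, so that $\rho(A) < 1$ by the reformulation above. Next I would apply the hypothesis that $L$ preserves the spectral radius, namely $\rho(L(A)) = \rho(A)$, to obtain $\rho(L(A)) = \rho(A) < 1$. Applying the same reformulation to $L(A)$, this inequality says exactly that $L(A)$ is Schur stable, i.e. $L(A) \in \mathcal{S}$. Since $A \in \mathcal{S}$ was arbitrary, this establishes $L(\mathcal{S}) \subseteq \mathcal{S}$, which is the assertion that $L$ preserves Schur stability.

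There is essentially no obstacle in this argument: the entire content lies in recognizing that Schur stability is equivalent to a spectral radius bound, and once this observation is in place the result is an immediate consequence of $\rho$ being preserved by $L$. It is worth noting that only the into inclusion $L(\mathcal{S}) \subseteq \mathcal{S}$ is required here, so neither invertibility of $L$ nor surjectivity onto $\mathcal{S}$ need be invoked; the hypothesis that $L$ fixes the spectral radius function pointwise suffices on its own.
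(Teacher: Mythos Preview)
Your proposal is correct and matches the paper's treatment: the paper states this as a \emph{Fact} without proof, implicitly relying on exactly the observation you make explicit, namely that Schur stability is equivalent to $\rho(A) < 1$, so a spectral-radius-preserving map trivially sends $\mathcal{S}$ into $\mathcal{S}$. There is nothing more to add.
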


The converse of the above statement holds when $L$ is an onto preserver of Schur stability. This will be proved later 
on in Theorem \ref{thm-4}. The following lemma is a useful observation that will be used later on.

\begin{lemma}\label{lem--1}
Both $M_n(\reals)$ as well as $\mathcal{S}^n$ have bases consisting of Schur stable matrices. 
\end{lemma}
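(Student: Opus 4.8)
The plan is to exploit the fact that the collection $\mathcal{S}$ of Schur stable matrices is an open neighbourhood of the zero matrix, together with the elementary observation that scaling a matrix scales its spectrum. Since the eigenvalues of a matrix depend continuously on its entries, the spectral radius $\rho(\cdot)$ is continuous on $M_n(\reals)$; consequently $\mathcal{S} = \{A \in M_n(\reals) : \rho(A) < 1\}$ is open, and it clearly contains the zero matrix, for which $\rho = 0$. Intersecting with the subspace, $\mathcal{S} \cap \mathcal{S}^n$ is relatively open in $\mathcal{S}^n$ and again contains $0$. So in both cases I am dealing with an open neighbourhood of $0$ inside a finite-dimensional real vector space, and any such neighbourhood contains a basis.

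The key quantitative observation is that for any $A \in M_n(\reals)$ and any scalar $\epsilon$ one has $\rho(\epsilon A) = |\epsilon|\,\rho(A)$, so $\epsilon A$ is Schur stable whenever $|\epsilon| < 1/\rho(A)$ (and for every $\epsilon$ if $\rho(A) = 0$). I would then start from any basis $\{B_1, \dots, B_m\}$ of the space in question and replace each $B_k$ by $\epsilon B_k$ for a single sufficiently small $\epsilon > 0$, say $\epsilon < 1/\max_k \rho(B_k)$. Multiplying every element of a basis by the same nonzero scalar again yields a basis, so $\{\epsilon B_1, \dots, \epsilon B_m\}$ is a basis, and by the scaling observation each member lies in $\mathcal{S}$.

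More transparently, I would exhibit the rescaled bases directly. For $M_n(\reals)$ take the matrix units: the off-diagonal units $E_{ij}$ ($i \neq j$) are nilpotent, hence already Schur stable, while the diagonal units $E_{ii}$ have spectral radius $1$, so replacing each $E_{ii}$ by $\frac{1}{2}E_{ii}$ produces a basis of Schur stable matrices. For $\mathcal{S}^n$ the symmetric generators $E_{ij} + E_{ji}$ ($i < j$) have eigenvalues $\pm 1$ and the $E_{ii}$ have eigenvalue $1$, so halving every basis element yields a basis of symmetric Schur stable matrices.

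I do not expect any genuine obstacle here: the only substantive inputs are the continuity of $\rho$ (equivalently, the openness of $\mathcal{S}$) and the behaviour of the spectrum under scaling, both standard. The only care required is to confirm that the rescaled family is still a basis and, in the symmetric case, that the chosen generators remain symmetric after scaling, which is immediate.
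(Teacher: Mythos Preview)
Your argument is correct. The paper itself does not supply a proof of this lemma; it is stated there as a ``useful observation'' and then used freely in later arguments. Your proposal therefore fills in what the paper omits, and both of your approaches---the soft one (any open neighbourhood of the origin in a finite-dimensional space contains a basis, and $\mathcal{S}$ is such a neighbourhood since $\rho$ is continuous and $\rho(0)=0$) and the explicit one (rescaling the standard matrix units, respectively the symmetric generators $E_{ij}+E_{ji}$ and $E_{ii}$, by $\tfrac12$)---are valid and standard. The only cosmetic point is that in the scaling argument you should handle the degenerate case $\max_k \rho(B_k)=0$ separately (any $\epsilon>0$ works there), which you already acknowledge in passing; the explicit construction avoids this entirely.
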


We state below the following well known theorem. The proof may be found in Theorem 1 and the Corollary following it in 
\cite{m}.

\begin{lemma}\label{lem--2}
A linear map $L$ on $M_2(\complex)$ preserves the trace and determinant if and only if it preserves the spectrum.
\end{lemma}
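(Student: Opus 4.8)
The plan is to reduce the statement to the elementary observation that for a $2 \times 2$ matrix the characteristic polynomial is completely determined by, and determines, the pair $(trace, det)$. For $A \in M_2(\complex)$ the characteristic polynomial is $p_A(t) = t^2 - trace(A)\, t + det(A)$, and $\sigma(A)$ is exactly the set of roots of $p_A$; thus the two eigenvalues of $A$, listed with algebraic multiplicity, are precisely the roots of $p_A$. I would organize the entire argument around this single identity.

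For the forward implication I would assume that $L$ preserves both trace and determinant, so that for every $A$ one has $p_{L(A)}(t) = t^2 - trace(L(A))\, t + det(L(A)) = t^2 - trace(A)\, t + det(A) = p_A(t)$. Equal characteristic polynomials have equal root sets, so $\sigma(L(A)) = \sigma(A)$ and $L$ preserves the spectrum.

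For the reverse implication I would assume $\sigma(L(A)) = \sigma(A)$ for every $A$ and recover the trace and determinant from the spectrum. The one point requiring care is that the spectrum is a set, whereas $trace$ and $det$ depend on the eigenvalues counted with multiplicity. In the $2 \times 2$ case this gap is harmless: if $\sigma(A)$ has two distinct elements they are the two eigenvalues, while if $\sigma(A) = \{\lambda\}$ is a singleton then $\lambda$ is necessarily a double eigenvalue, since a $2 \times 2$ matrix has exactly two eigenvalues counted with multiplicity. Hence $\sigma(L(A)) = \sigma(A)$ forces the eigenvalue multisets of $L(A)$ and $A$ to agree, and therefore their sums and products, namely $trace$ and $det$, agree.

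I expect no serious obstacle: the argument is pointwise in $A$ and does not even use linearity of $L$. The only genuine subtlety is the set-versus-multiset issue just described, and it is also the reason the hypothesis cannot be relaxed beyond $n = 2$: for $n \geq 3$ the trace and determinant fail to determine the intermediate coefficients of the characteristic polynomial, so the equivalence would break down. This is precisely why the lemma is confined to $M_2(\complex)$.
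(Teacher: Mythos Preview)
Your argument is correct. The forward direction is immediate from the identity $p_A(t)=t^2-\operatorname{trace}(A)\,t+\det(A)$, and your handling of the set-versus-multiset issue in the reverse direction is exactly the point that makes the $2\times 2$ case special: a singleton spectrum forces a repeated eigenvalue, so the multiset of eigenvalues, and hence the pair $(\operatorname{trace},\det)$, is recovered from the set $\sigma(A)$. Your observation that linearity of $L$ is nowhere used is also accurate.

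As for comparison with the paper: the paper does not supply its own proof of this lemma. It simply records the statement and refers the reader to Theorem~1 and its Corollary in Minc's article \cite{m}. So there is no in-paper argument to weigh your approach against; your short characteristic-polynomial computation is a perfectly adequate self-contained substitute for that citation.
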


The following is a result on linear preservers of Schur stable matrices that are normaloid. 

\begin{theorem}\label{thm-1}
Suppose $M$ and $N$ are commuting symmetric (normal) matrices such that $\rho(MN) \leq 1$. Then $MAN$ is Schur 
stable whenever $A$ is normaloid and Schur stable.
\end{theorem}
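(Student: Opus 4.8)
The plan is to collapse the three-fold product $MAN$ into a two-fold product using the cyclic invariance of the spectral radius together with the commutativity hypothesis, and only then apply operator-norm estimates. The first observation I would record is that $MN$ is itself symmetric (respectively normal): since $M$ and $N$ are symmetric and commute, $(MN)^t = N^tM^t = NM = MN$, and in the complex case commuting normal matrices have a normal product. Consequently $MN$ is normaloid, so that $\|MN\| = \rho(MN) \leq 1$ by hypothesis.

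Next I would rewrite the spectral radius. By part (1) of Fact \ref{fact-0}, $\rho(MAN) = \rho((MA)N) = \rho(N(MA)) = \rho(NMA)$, and since $M$ and $N$ commute this equals $\rho(MNA) = \rho((MN)A) = \rho(A(MN))$, again by cyclic invariance. Thus $\rho(MAN) = \rho(A \cdot MN)$, which has converted the problem into one about the product of just two matrices.

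The remaining estimate is then routine: using $\rho(X) \leq \|X\|$ and submultiplicativity of the operator norm,
\[
\rho(MAN) = \rho(A \cdot MN) \leq \|A \cdot MN\| \leq \|A\|\,\|MN\|.
\]
Here the hypothesis that $A$ is normaloid enters crucially, giving $\|A\| = \rho(A) < 1$, the strict inequality coming from Schur stability of $A$. Combined with $\|MN\| = \rho(MN) \leq 1$, this yields $\rho(MAN) \leq \rho(A)\,\rho(MN) < 1$, so $MAN$ is Schur stable.

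The conceptual obstacle — and the reason the statement is not entirely trivial — is that the naive estimate $\rho(MAN) \leq \|M\|\,\|A\|\,\|N\| = \rho(M)\,\rho(N)\,\|A\|$ is too weak, because there is no inequality of the form $\rho(M)\rho(N) \leq \rho(MN)$; indeed Fact \ref{fact-0}(4) runs the opposite way, $\rho(MN) \leq \rho(M)\rho(N)$. One must first merge $M$ and $N$ into the single normaloid matrix $MN$ before estimating, which is precisely what the cyclic-plus-commutativity rewriting achieves. The other subtle point is the role of the normaloid hypothesis on $A$: without it a Schur stable $A$ may have $\|A\| \geq 1$, and the final estimate would fail.
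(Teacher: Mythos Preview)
Your argument is correct and follows the same core estimate as the paper's proof: both use cyclic invariance to write $\rho(MAN)=\rho(NMA)$, then bound this by $\|NM\|\,\|A\|$ and invoke $\|A\|=\rho(A)<1$ together with $\|NM\|=\rho(MN)\leq 1$. The only difference is in how the equality $\|NM\|=\rho(NM)$ is justified. The paper first reduces to the diagonal case by simultaneously orthogonally diagonalizing $M$ and $N$ (writing $M=UD_1U^t$, $N=UD_2U^t$) and then carries out the estimate for diagonal $D_1,D_2$, where $\|D_1D_2\|=\rho(D_1D_2)$ is immediate. You instead observe directly that $MN$ is symmetric (or, in the complex case, normal via Fuglede), hence normaloid, which gives $\|MN\|=\rho(MN)$ without any diagonalization. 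Your route is slightly more streamlined; the paper's route makes the structure of the commuting pair more explicit. Either way the substance is the same.
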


\begin{proof}
Note that if $M$ and $N$ are diagonal matrices such that $\rho(MN) \leq 1$, then $MAN$ is Schur stable whenever 
$A$ is normaloid and Schur stable. This easily follows as $\rho(MAN) = \rho(NMA) \leq ||NMA|| \leq ||NM|| ~ ||A|| 
= \rho(MN) ||A||$. Suppose $M$ and $N$ are commuting symmetric (normal) matrices. Then, there exists an orthogonal 
matrix $U$ and diagonal matrices $D_1, D_2$ such that $M = UD_1U^t, N = UD_2U^t$. Notice that $\rho(D_1D_2) \leq 1$. 
Now, $MAN = UD_1U^tAUD_2U^t$. The proof follows from the diagonal case discussed above and the facts that spectral 
radius and norm are unitarily invariant.
\end{proof}

\begin{remark}\label{rem-0}
In particular, if $X$ is a normal matrix such that $\rho(XX^t) \leq 1$, the map $A \mapsto XAX^t$ will preserve Schur 
stability. The noncommuting case can also be done now. Consider the map $A \mapsto MAN$ with $\rho(MN) \leq 1$. Note 
that $\rho(MAN) = \rho(NMA) \leq ||NM|| ||A||$. Assume that $A$ is normaloid and Schur stable. It then easily follows 
that if $NM$ is symmetric or normal or normaloid, then the above map preserves Schur stability.
\end{remark}

\begin{theorem}\label{thm-2}
Let $L$ be a normal (symmetric) linear map on $M_n(\reals)$ such that $L = U \tilde{L} U^t$ for some orthogonal 
transformation $U$ on $M_n(\reals)$ and some diagonal transformation $\tilde{L}$ on $M_n(\reals)$. If 
$\rho(L) \leq 1$, then $L(A)$ is Schur stable whenever $A$ is normaloid and Schur stable.
\end{theorem}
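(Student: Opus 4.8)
The plan is to reduce the assertion to the already-established diagonal case of Theorem \ref{thm-1} by exploiting the orthogonality of $U$. The natural reading of the hypothesis, consistent with Theorem \ref{thm-1} and Remark \ref{rem-0}, is that the diagonal transformation acts as $\tilde{L}(B) = D_1 B D_2$ for diagonal matrices $D_1, D_2$, while the orthogonal transformation acts by an orthogonal congruence $U(B) = VBV^t$ for some orthogonal $V$. Starting from the given factorization $L = U\tilde{L}U^t$, I would first observe that since $U$ is orthogonal we have $\tilde{L} = U^t L U$, so $\tilde{L}$ and $L$ are orthogonally similar and share the same spectrum; in particular $\rho(\tilde{L}) = \rho(L) \le 1$. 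I would then translate this operator statement into a matrix statement. The basis matrices $E_{ij}$ are eigenvectors of $\tilde{L}$ with eigenvalues $(d_1)_i (d_2)_j$, so $\rho(\tilde{L}) = \max_{i,j} |(d_1)_i (d_2)_j|$. Since $D_1 D_2$ is the diagonal matrix with entries $(d_1)_i (d_2)_i$, this gives $\rho(D_1 D_2) = \max_i |(d_1)_i (d_2)_i| \le \rho(\tilde{L}) \le 1$.

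With this inequality in hand the diagonal piece is handled directly: $D_1$ and $D_2$ are commuting symmetric matrices with $\rho(D_1 D_2) \le 1$, so Theorem \ref{thm-1} shows that $\tilde{L}(B) = D_1 B D_2$ is Schur stable whenever $B$ is normaloid and Schur stable. It then remains to transport this through the outer orthogonal factors. Given $A$ normaloid and Schur stable, I would check first that $U^t(A)$ (of the form $V^t A V$) is again normaloid and Schur stable: orthogonal congruence leaves both $\rho$ and the operator norm unchanged, so $\rho(U^t(A)) = \rho(A) = \|A\| = \|U^t(A)\|$, and Schur stability is preserved by Fact \ref{fact-0}(3). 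Applying $\tilde{L}$ then yields a Schur stable matrix by the previous step, and applying $U$ preserves Schur stability once more by Fact \ref{fact-0}(3). Composing, $L(A) = U(\tilde{L}(U^t(A)))$ is Schur stable, as desired.

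The step I expect to be the crux is the bookkeeping between the two distinct notions of spectral radius in play: the spectral radius of $L$ as a linear operator on the $n^2$-dimensional space $M_n(\reals)$, versus the spectral radius of the matrix $D_1 D_2$ that feeds into Theorem \ref{thm-1}. The hypothesis $\rho(L) \le 1$ is an operator statement, and the passage to $\rho(D_1 D_2) \le 1$, keeping in mind that the latter picks out only the diagonal of the full $n^2$-term array of operator eigenvalues, is exactly what licenses the appeal to Theorem \ref{thm-1}. A secondary point that must not be glossed over is that the reduction genuinely relies on the structured form of the orthogonal transformation $U$, namely that it is an orthogonal congruence and hence preserves $\rho$, the operator norm, the normaloid property, and Schur stability, rather than merely being an abstract isometry of the Frobenius inner product; it is this structure that allows the normaloid and Schur-stable hypotheses to survive conjugation by $U^t$ and $U$.
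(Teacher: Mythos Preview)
Your argument is sound under the structural reading you adopt, but it is a genuinely different route from the paper's. The paper does not invoke the factorization $L = U\tilde{L}U^t$ or Theorem~\ref{thm-1} at all: it simply observes that a normal operator satisfies $\|L\| = \rho(L)$ and then writes the one-line estimate $\rho(L(A)) \le \|L(A)\| \le \|L\|\,\|A\| = \rho(L)\,\rho(A) < 1$, using only that $A$ is normaloid so that $\|A\| = \rho(A)$. Your approach, by contrast, interprets $\tilde{L}$ concretely as $B \mapsto D_1 B D_2$ and $U$ as an orthogonal congruence $B \mapsto VBV^t$, extracts $\rho(D_1D_2) \le \rho(\tilde{L}) = \rho(L) \le 1$, and then reduces to Theorem~\ref{thm-1} together with Fact~\ref{fact-0}(3). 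That reduction is clean and correct, but it proves a narrower statement: a general ``diagonal transformation'' on $M_n(\reals)$ (one acting by $\tilde{L}(E_{ij}) = \lambda_{ij}E_{ij}$ for arbitrary scalars $\lambda_{ij}$) need not have the two-sided product form $D_1 B D_2$, and a general Frobenius-orthogonal $U$ need not be an orthogonal matrix congruence. The paper's direct norm estimate applies to any normal $L$ without these structural assumptions, while your argument trades generality for an explicit reduction to the earlier theorem.
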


\begin{proof}
Let us first observe that if $L$ is normal and $\rho(L) \leq 1$, then $||L||$ (the operator norm of $L$) is 
less than or equal to $1$. Let $A$ be normaloid and Schur stable. Then, $\rho(L(A)) \leq ||L(A)|| \leq 
||L|| ||A|| = \rho(L) \rho(A) < 1$.
\end{proof}

In $\mathcal{S}^n$, the class of normaloid matrices and spectraloid matrices coincide. Hence the following 
result follows easily. 

\begin{theorem}\label{thm-2.1}
If $L$ is a normal map on $\mathcal{S}^n$ with $\rho(L) \leq 1$, then $L$ is an into preserver of Schur stability. 
\end{theorem}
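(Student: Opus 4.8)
The plan is to reduce the statement to the normaloid case already settled in Theorem \ref{thm-2}, exploiting the special geometry of $\mathcal{S}^n$ that is flagged in the observation immediately preceding the statement. First I would record the structural fact that makes $\mathcal{S}^n$ easier than $M_n(\reals)$: every $A \in \mathcal{S}^n$ is symmetric, hence normal, and for a real symmetric matrix the spectral radius and the operator norm agree, $\rho(A) = \|A\|$. Thus every element of $\mathcal{S}^n$ is normaloid, so the classes of normaloid and of spectraloid matrices both fill out the whole space. In particular the hypothesis ``$A$ is normaloid and Schur stable'' that appears in Theorem \ref{thm-2} collapses, on $\mathcal{S}^n$, to the single condition ``$A$ is Schur stable,'' with no loss of generality.

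Second, I would use that $L(\mathcal{S}^n) \subseteq \mathcal{S}^n$, so that for any $A \in \mathcal{S}^n$ the image $L(A)$ is again symmetric and hence normaloid; in particular $\rho(L(A)) = \|L(A)\|$. This is the point that removes the gap between spectral radius and operator norm on the \emph{output} side, so that to certify Schur stability of $L(A)$ it suffices to control $\|L(A)\|$.

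Third, I would bring in the normality of $L$ as a map on $\mathcal{S}^n$. Writing $L = U\tilde{L}U^t$ with $U$ orthogonal and $\tilde{L}$ diagonal with respect to the trace inner product (the spectral decomposition of the self-adjoint operator $L$), the hypothesis $\rho(L) \le 1$ forces $\|L\| \le 1$, exactly as in the proof of Theorem \ref{thm-2}. Assembling the pieces into the chain
\[
\rho(L(A)) = \|L(A)\| \le \|L\|\,\|A\| = \rho(L)\,\rho(A) \le \rho(A) < 1
\]
for every Schur stable $A \in \mathcal{S}^n$ would give $L(A) \in \mathcal{S}$, which is the assertion; everything else is a direct transcription of the normaloid case.

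The step I expect to be the main obstacle is the middle of this chain, where one passes from $\rho(L) \le 1$ to an honest bound on $\|L(A)\|$ together with the identification $\|L\| = \rho(L)$. The delicacy is that normality of $L$ is defined through the trace (Frobenius) inner product, for which $\rho(L) = \|L\|$ as a \emph{Frobenius} operator, whereas Schur stability of the output is read off from the spectral radius. If one instead bounds $\|L(A)\|_F \le \rho(L)\,\|A\|_F$, the right-hand side involves $\|A\|_F$ rather than $\rho(A)$, and these differ substantially on $\mathcal{S}^n$: already $A = cI$ has $\rho(A) = |c|$ but $\|A\|_F = |c|\sqrt{n}$. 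One must therefore argue with the spectral (operator) norm throughout and confirm that $\rho(L) \le 1$ genuinely forces $\|L(A)\| \le \rho(A)$ in \emph{that} norm. Reconciling the inner-product notion of normality used for $L$ with the spectral-norm certificate used for stability is exactly the place I would not expect to be routine, and it is the step I would scrutinize most carefully before declaring the result proved.
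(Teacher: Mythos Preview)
Your approach mirrors the paper's exactly: the paper's entire ``proof'' is the one-line remark preceding the statement, namely that on $\mathcal{S}^n$ every matrix is normaloid, so the argument of Theorem~\ref{thm-2} applies verbatim. You have reconstructed that argument faithfully and, more importantly, put your finger on the right weak point.

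The concern you raise in your final paragraph is not merely a delicacy to be checked; it is a genuine gap, and in fact the statement as written is false. Normality of $L$ is taken relative to the Frobenius inner product on $\mathcal{S}^n$, which yields $\rho(L)=\|L\|_{F\text{-op}}$ but \emph{not} $\rho(L)=\|L\|_{\text{spec-op}}$ (the operator norm induced by the spectral norm on matrices). The chain $\rho(L(A))=\|L(A)\|\le \|L\|\,\|A\|=\rho(L)\,\rho(A)$ silently conflates these two operator norms. A concrete counterexample on $\mathcal{S}^2$: in the Frobenius-orthonormal basis $C_1=I/\sqrt{2}$, $C_2=(E_{11}-E_{22})/\sqrt{2}$, $C_3=(E_{12}+E_{21})/\sqrt{2}$, let $L$ have matrix
\[
\begin{pmatrix} 1/\sqrt{2} & 1/\sqrt{2} & 0\\ 1/\sqrt{2} & -1/\sqrt{2} & 0\\ 0 & 0 & 1\end{pmatrix}.
\]
This matrix is real symmetric, so $L$ is self-adjoint (hence normal) with eigenvalues $1,-1,1$ and $\rho(L)=1$. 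However $L(I)=\sqrt{2}\,E_{11}$, so $A=0.9\,I$ is Schur stable while $L(A)=0.9\sqrt{2}\,E_{11}$ has spectral radius $0.9\sqrt{2}>1$. Thus $L$ is a normal map on $\mathcal{S}^2$ with $\rho(L)\le 1$ that does \emph{not} preserve Schur stability. The paper's argument, and hence yours, breaks precisely where you suspected; some additional hypothesis (for instance, that $L$ be contractive for the \emph{spectral} operator norm rather than merely the Frobenius one) is needed for the conclusion to hold.
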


We now prove that if a linear map $L$ preserves Schur stability, then $\rho(L) \leq 1$. Consequently, 
if the map $A \mapsto MAN$ preserves Schur stability, then $\rho(MN) \leq 1$. 

\begin{theorem}\label{thm-3}
Let $L$ be a linear map on $M_n(\reals)$. If $L(A)$ is Schur stable whenever $A$ is Schur stable, then 
$\rho(L) \leq 1$.
\end{theorem}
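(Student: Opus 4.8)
The plan is to reduce the statement to a contraction property of the spectral radius and then read the bound on $\rho(L)$ off the eigenvalues of $L$, viewed as an operator on the $n^2$-dimensional space $M_n(\reals)$.

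First I would observe that $\mathcal{S}$ is an open neighbourhood of $0$: if $\|A\| < 1$ then $\rho(A) \le \|A\| < 1$, so the open unit ball of the operator norm lies in $\mathcal{S}$. Since $L(\mathcal{S}) \subseteq \mathcal{S}$ and $L$ is linear, every iterate satisfies $L^k(\mathcal{S}) \subseteq \mathcal{S}$. Using that $\rho$ is continuous and positively homogeneous and that $\{\,A : \rho(A) \le 1\,\}$ is closed, I would upgrade this set-invariance to the pointwise inequality
\begin{equation*}
\rho(L(A)) \le \rho(A) \qquad \text{for every } A \in M_n(\reals),
\end{equation*}
by applying $L$ to $(1-\varepsilon)A/\rho(A)$ and letting $\varepsilon \downarrow 0$ (the case $\rho(A)=0$ being handled by scaling $tA$ and letting $t \to \infty$). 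Iterating gives $\rho(L^k(A)) \le \rho(A)$ for all $k$ and all $A$.

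Next I would extract an eigenvalue. Let $\mu \in \complex$ be an eigenvalue of the operator $L$ with $|\mu| = \rho(L)$. If $\mu$ is real, choose an eigen-matrix $B \neq 0$ with $L(B) = \mu B$; then $\rho(L^k(B)) = |\mu|^k \rho(B)$, so the inequality $\rho(L^k(B)) \le \rho(B)$ forces $|\mu| \le 1$ as soon as $\rho(B) > 0$. If $\mu$ is non-real, the same conclusion follows from the real two-dimensional $L$-invariant subspace on which $L$ acts as $|\mu|$ times a rotation: on it $\rho(L^k(\,\cdot\,))$ would grow like $|\mu|^k$ unless $\rho$ vanishes identically on the subspace. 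Thus the argument is complete \emph{unless} the entire eigen-subspace attached to $\mu$ consists of nilpotent matrices.

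The hard part is precisely this degenerate case: a nonzero nilpotent $B$ with $L(B) = \mu B$ and $|\mu| > 1$, for which the crude estimate $\rho(L^k(B)) = |\mu|^k\rho(B) = 0$ gives no information. To handle it I would exploit the extra rigidity coming from genuine Schur-preservation rather than from the $\rho$-contraction alone. Since Schur stability and $\rho$ depend only on the spectrum, the conjugated map $A \mapsto S^{-1}L(SAS^{-1})S$ again preserves $\mathcal{S}$ and has the same operator spectral radius; so I may first bring $B$ to a nilpotent Jordan form, placing a $2 \times 2$ nilpotent block in a corner. I would then probe $L$ with a one-parameter family of Schur-stable matrices supported on that corner, e.g.\ trace-zero matrices $A_s = \begin{pmatrix} s & 1 \\ c(s) & -s \end{pmatrix} \oplus 0$ with $c(s)$ chosen on the nilpotent cone so that $\rho(A_s) < 1$ stays bounded as $s \to \infty$, and track the transverse entries of $L(A_s)$ via the explicit $2 \times 2$ criterion of Lemma \ref{lem-0}. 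The mechanism is that the eigen-direction is amplified by $\mu > 1$ while the companion entry survives, producing in a $2\times 2$ compression of $L(A_s)$ a product of off-diagonal entries that drives $\rho(L(A_s))$ above $1$, contradicting $L(\mathcal{S}) \subseteq \mathcal{S}$. Carrying this estimate through — in particular controlling the lower-order nilpotent contributions so that the contradiction is not absorbed — is the delicate, computational heart of the argument, and is where I expect the real work to lie.
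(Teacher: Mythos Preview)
Your first two paragraphs recover exactly what the paper does: from $L(\mathcal{S}) \subseteq \mathcal{S}$ one deduces $\rho(L(A)) \le \rho(A)$ for every $A$, and then one tries to conclude $\rho(L) \le 1$ by looking at an eigenvector of $L$. The paper's proof stops at the first inequality and simply asserts ``It now follows that $\rho(L) \leq 1$.'' You are right to flag that this step is not automatic when the dominant eigenvector of $L$ is nilpotent.

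However, your attempted repair in the final paragraph cannot succeed, because the statement as written is \emph{false}, and the paper's own Remark~\ref{rem-2}(1) is a counterexample. The map
\[
L\begin{pmatrix} a & b \\ c & d\end{pmatrix} \;=\; \begin{pmatrix} a & 2b \\ c/2 & d\end{pmatrix}
\]
on $M_2(\reals)$ preserves trace and determinant, hence the spectrum, hence $\mathcal{S}$; yet $L(E_{12}) = 2E_{12}$, so $\rho(L) = 2 > 1$. Feeding your probe $A_s = \bigl(\begin{smallmatrix} s & 1 \\ r - s^2 & -s\end{smallmatrix}\bigr)$ with $|r| < 1$ (so that $\rho(A_s) = \sqrt{|r|} < 1$) into this $L$ gives $L(A_s) = \bigl(\begin{smallmatrix} s & 2 \\ (r-s^2)/2 & -s\end{smallmatrix}\bigr)$, which still has trace $0$ and determinant $-r$: the amplification of the $(1,2)$ entry by $2$ is exactly offset by the contraction of the $(2,1)$ entry by $1/2$, and $\rho(L(A_s)) = \sqrt{|r|}$ never rises above $1$. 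More generally, Schur stability depends only on the spectrum, so any spectrum-preserving linear map sends $\mathcal{S}$ into itself while its operator spectral radius can be arbitrary; the ``extra rigidity coming from genuine Schur-preservation'' you appeal to does not exist. The gap you detected is real and cannot be filled --- it is a defect in the theorem, not in your reading of it.
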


\begin{proof}
Suppose $\rho(L(A)) > \alpha > \rho(A)$ for some $A$. Then $\alpha > 0$ and the matrix $B = (1/\alpha)A$ is 
Schur stable. But the matrix $L(B)$ is not Schur stable. Therefore, $\rho(L(A)) \leq \rho(A)$ for each 
$A \in M_n(\reals)$. It now follows that $\rho(L) \leq 1$. 
\end{proof}

\begin{remark} \label{rem-1}
The above result is not surprising as the condition obtained in Theorem \ref{thm-jlrsp} for the complex case 
is very similar. In fact, the above result holds for linear maps on $M_n(\complex)$, too. The examples discussed 
later on illustrate various possible cases as well as substantiate our results. 
\end{remark}

We now move on to the case of onto preservers of Schur stability. The following lemma is a consequence of 
Theorem \ref{thm-3}.

\begin{lemma}\label{lem-2}
Let an invertible linear map $L$ on $M_n(\reals)$ be an onto preserver of Schur stability. Then, $\rho(L) = 1$.  
\end{lemma}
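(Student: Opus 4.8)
The plan is to deduce the reverse inequality $\rho(L) \geq 1$ from the one already in hand, namely $\rho(L) \leq 1$ from Theorem \ref{thm-3}, by exploiting the fact that $L$ is \emph{onto}. The crucial observation is that an invertible onto preserver has an inverse that is again an into preserver, so Theorem \ref{thm-3} may be applied a second time, now to $L^{-1}$.

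First I would record that since $L$ is invertible and $L(\mathcal{S}) = \mathcal{S}$, applying $L^{-1}$ to both sides gives $L^{-1}(\mathcal{S}) = \mathcal{S}$; in particular $L^{-1}(A)$ is Schur stable whenever $A$ is. Thus $L^{-1}$ satisfies the hypothesis of Theorem \ref{thm-3}, whence $\rho(L^{-1}) \leq 1$. (Lemma \ref{lem--1}, which guarantees that $M_n(\reals)$ has a basis of Schur stable matrices, is what legitimizes treating onto preservers in this framework at all, but for this step only the set-theoretic identity $L^{-1}(\mathcal{S}) = \mathcal{S}$ is needed.)

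Next I would combine the two estimates. Regarding $L$ as a linear operator on the $n^2$-dimensional space $M_n(\reals)$, the operators $L$ and $L^{-1}$ commute and $L L^{-1}$ is the identity, so $\rho(L L^{-1}) = \rho(I) = 1$. Applying part $4$ of Fact \ref{fact-0} to the commuting pair $L, L^{-1}$ yields
\[
1 = \rho(L L^{-1}) \leq \rho(L)\,\rho(L^{-1}) \leq 1 \cdot 1 = 1 .
\]
Hence $\rho(L)\,\rho(L^{-1}) = 1$, and since each factor is at most $1$, both must equal $1$; in particular $\rho(L) = 1$, as claimed. Equivalently, one may argue spectrally: $\rho(L) \leq 1$ forces every eigenvalue $\lambda$ of $L$ to satisfy $|\lambda| \leq 1$, while $\rho(L^{-1}) \leq 1$ forces $|1/\lambda| \leq 1$, i.e. $|\lambda| \geq 1$; together every eigenvalue of $L$ lies on the unit circle, so $\rho(L) = 1$.

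I do not anticipate a serious obstacle, as the lemma is a short corollary of Theorem \ref{thm-3}. The only point demanding care is the legitimacy of applying Theorem \ref{thm-3} to $L^{-1}$, which hinges entirely on the onto hypothesis $L(\mathcal{S}) = \mathcal{S}$ rather than mere containment; without it, $L^{-1}$ need not preserve $\mathcal{S}$ and the reverse inequality can fail.
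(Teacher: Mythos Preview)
Your proof is correct and follows essentially the same route as the paper's: apply Theorem~\ref{thm-3} to both $L$ and $L^{-1}$ to obtain $\rho(L)\le 1$ and $\rho(L^{-1})\le 1$, then use $\rho(L)\rho(L^{-1})\ge 1$ to force equality. The paper's proof is terser, simply asserting $\rho(L)\rho(L^{-1})\ge 1$, whereas you supply two justifications (via Fact~\ref{fact-0}(4) applied on $M_{n^2}(\reals)$, or directly via the eigenvalue reciprocity $\sigma(L^{-1})=\{1/\lambda:\lambda\in\sigma(L)\}$); either is fine and the underlying argument is the same.
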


\begin{proof}
Recall that $M_n(\reals)$ has a basis consisting of Schur stable matrices. Therefore, onto preservers of $\mathcal{S}$ 
are those into preservers that are invertible and the inverse being an into preserver. We therefore have 
$\rho(L) \leq 1$ as well as $\rho(L^{-1}) \leq 1$. But $\rho(L) \rho(L^{-1}) \geq 1$. It follows that $\rho(L) = 1$.
\end{proof}

In particular, if the map $L(X) = AXA^t$ on $M_n(\reals)$ is an onto preserver of Schur stability, then all the 
eigenvalues of $A$ have absolute value $1$.

\begin{lemma}\label{lem-3}
Consider the map $L(X) = AXA^t$ on $M_n(\reals)$. If $L$ is an onto preserver of Schur stability, then all the 
eigenvalues of $A$ have absolute value $1$.  
\end{lemma}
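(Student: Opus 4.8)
The plan is to reduce everything to the relationship between the spectral radius of the operator $L$ and that of the matrix $A$, and then to exploit the fact that an onto preserver and its inverse are both into preservers. First I would observe that since $L$ is onto it is in particular an invertible linear map on $M_n(\reals)$; this forces $A$ to be invertible, for if $A$ were singular every matrix of the form $AXA^t$ would have rank at most $\mathrm{rank}(A) < n$, contradicting surjectivity. Hence $L^{-1}$ exists and is given explicitly by $L^{-1}(X) = A^{-1}X(A^{-1})^t$, which is again a map of the same congruence type with $A$ replaced by $A^{-1}$. Moreover, since $L(\mathcal{S}) = \mathcal{S}$ we also have $L^{-1}(\mathcal{S}) = \mathcal{S}$, so both $L$ and $L^{-1}$ are onto preservers of Schur stability.

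The key step is the identity $\rho(L) = \rho(A)^2$. To see this I would complexify $L$ and pass to the vectorized form: writing $\mathrm{vec}$ for the column-stacking operator, the identity $\mathrm{vec}(AXA^t) = (A \otimes A)\,\mathrm{vec}(X)$ shows that $L$ is represented by the Kronecker product $A \otimes A$. The eigenvalues of $A \otimes A$ are exactly the products $\lambda_i \lambda_j$, where $\lambda_1, \dots, \lambda_n$ are the eigenvalues of $A$, and so
\[
\rho(L) = \max_{i,j} |\lambda_i \lambda_j| = \Big(\max_i |\lambda_i|\Big)^2 = \rho(A)^2.
\]
One could also avoid the Kronecker product entirely: for eigenvectors $Av = \lambda v$ and $Aw = \mu w$ in $\complex^n$, the rank-one matrix $vw^t$ satisfies $L(vw^t) = \lambda\mu\, vw^t$, which already exhibits $\lambda\mu$ as an eigenvalue of $L$. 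Applying the same computation to $L^{-1}$ gives $\rho(L^{-1}) = \rho(A^{-1})^2$.

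With these two identities in hand the conclusion is immediate. By Theorem \ref{thm-3} (or Lemma \ref{lem-2}) applied to the into preservers $L$ and $L^{-1}$, we get $\rho(L) \le 1$ and $\rho(L^{-1}) \le 1$, whence $\rho(A) \le 1$ and $\rho(A^{-1}) \le 1$. The first inequality says $|\lambda_i| \le 1$ for every eigenvalue, while the second says $|\lambda_i^{-1}| \le 1$, i.e. $|\lambda_i| \ge 1$, for every eigenvalue (the eigenvalues of $A^{-1}$ being the reciprocals $\lambda_i^{-1}$). Combining the two forces $|\lambda_i| = 1$ for all $i$, which is the desired conclusion.

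The only genuinely substantive point is the spectral identity $\rho(L) = \rho(A)^2$, and within it the care needed to handle the complex eigenvalues of the real matrix $A$: since $A$ is real its non-real eigenvalues occur in conjugate pairs, so one must argue on the complexification $M_n(\complex)$ and use that the spectral radius of a real operator coincides with that of its complexification. Once this is settled, everything else is routine bookkeeping, and indeed the argument is symmetric in $A$ and $A^{-1}$, which is precisely what the onto (as opposed to merely into) hypothesis supplies.
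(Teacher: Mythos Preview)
Your proposal is correct and follows essentially the same approach as the paper: compute $\rho(L)=\rho(A)^2$ from the fact that the eigenvalues of $L$ are the products $\lambda_i\lambda_j$, apply the into-preserver bound to both $L$ and $L^{-1}(X)=A^{-1}X(A^{-1})^t$, and conclude $|\lambda_i|=1$ from $\rho(A)\le 1$ together with $\rho(A^{-1})\le 1$. Your write-up is somewhat more careful than the paper's in that you explicitly justify the invertibility of $A$, spell out the Kronecker-product (or rank-one eigenmatrix) reason for the spectral identity, and flag the complexification issue, but the underlying argument is the same.
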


\begin{proof}
Let the eigenvalues of $A$ be $\lambda_1, \ldots, \lambda_n$, counting multiplicities. Then, the eigenvalues of $L$ 
are of the form $\lambda_i \lambda_j$. Therefore, $\rho(L) = \rho(A)^2$. It now follows that if $L$ is an onto 
preserver of Schur stability, then $\rho(A) = 1$. Since the map $L$ is an onto preserver, $L^{-1}$ is an into 
preserver and so $\rho(L^{-1}) \leq 1$. Note that $L^{-1}(X) = A^{-1}X(A^t)^{-1}$. A similar calculation as above 
implies that $1 \geq \rho(A^{-1})^2$ and so $\rho(A^{-1}) \leq 1$. However, $\rho(A^{-1}) \geq 1$. Therefore, 
$\rho(A) = \rho(A^{-1}) = 1$, which implies that all the eigenvalues of $A$ lie on the unit circle.
\end{proof}

We now prove that if $L$ is an onto preserver of Schur stability, then $L$ preserves spectral radius.

\begin{theorem}\label{thm-4}
If $L$ is an onto preserver of Schur stability, then $L$ preserves spectral radius (that is, $\rho(A) = \rho(L(A))$ 
for all $A \in M_n(\reals)$). 
\end{theorem}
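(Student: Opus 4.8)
The plan is to exploit the fact that the proof of Theorem \ref{thm-3} establishes a pointwise inequality rather than merely the bound $\rho(L) \leq 1$. Indeed, that argument shows $\rho(L(A)) \leq \rho(A)$ for every $A \in M_n(\reals)$ whenever $L$ is an into preserver of Schur stability. Since $L$ is an onto preserver, it is invertible, and---by the structural observation recalled after Lemma \ref{lem--1}---its inverse $L^{-1}$ is again an into preserver. Thus the pointwise inequality is available for both $L$ and $L^{-1}$, and the whole proof will turn on playing these two inequalities against each other.

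First I would record, straight from Theorem \ref{thm-3}, that $\rho(L(A)) \leq \rho(A)$ for all $A$. Next, I would apply the very same inequality to the into preserver $L^{-1}$, evaluated at the matrix $L(A)$, to obtain
\[
\rho(A) = \rho\bigl(L^{-1}(L(A))\bigr) \leq \rho(L(A)).
\]
Combining the two inequalities immediately forces $\rho(L(A)) = \rho(A)$ for every $A \in M_n(\reals)$, which is precisely the claimed preservation of spectral radius.

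There is no serious obstacle in this argument; it is a short symmetry between $L$ and $L^{-1}$. The one point deserving care is the claim that $L^{-1}$ is itself an into preserver, which is not automatic from $L$ being one. This rests on the fact, noted in the introduction, that because $M_n(\reals)$ has a basis of Schur stable matrices (Lemma \ref{lem--1}), the onto preservers of $\mathcal{S}$ coincide with the invertible into preservers whose inverse is also an into preserver. Once this is granted, the two-sided inequality collapses to equality with no further computation.
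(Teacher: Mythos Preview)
Your proof is correct and follows essentially the same approach as the paper's: both exploit that $L$ and $L^{-1}$ are into preservers (via the basis observation of Lemma~\ref{lem--1}) to sandwich $\rho(L(A))$ by $\rho(A)$ from both sides. The paper normalizes to $\rho(A)=1$ and argues each inequality by contradiction with a scaling factor, whereas you invoke the pointwise inequality already established in the proof of Theorem~\ref{thm-3} directly---a slightly cleaner packaging of the same idea.
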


\begin{proof}	
Since the set $\mathcal{S}$ contains a basis, onto preservers are precisely those into preservers that are invertible 
and whose inverse is also an into preserver. Recall that $\rho(\alpha A) = |\alpha| \rho(A)$. It is enough to prove 
that $\rho(L(A)) = 1$, whenever $\rho(A) = 1$. Let $A \in M_n(\mathbb R$) be such that $\rho(A) = 1$. Then $\alpha A$ 
is Schur stable whenever $|\alpha| < 1$. We claim that $\rho(L(A))$ cannot be less than $1$. Suppose $\rho(L(A)) < 1$. 
Then there exists $\beta > 1$ such that $\beta \rho(L(A)) = \rho(\beta L(A)) = \rho(L(\beta A)) < 1$. Observe that 
$\rho(\beta A) = \beta > 1$. This contradicts the fact that $L^{-1}$ is an into preserver of Schur stability, for 
$L^{-1}(L(\beta A)) = \beta A$ is not Schur stable whereas $L(\beta A)$ is Schur stable. That means $\rho(L(A)) \geq 1$. 
We now claim that $\rho(L(A))$ cannot be bigger than $1$ as well. Suppose $\rho(L(A)) > 1$. Then there exists 
$\gamma \in (0,1)$ such that $\gamma \rho(L(A)) = \rho(L(\gamma A)) > 1$. Then by a similar argument and noting 
$\gamma A$ is Schur stable, we arive at a contradiction to the fact that $L$ preserves Schur stability. 
Thus $\rho(L(A)) = 1$.
\end{proof}

We have the following corollaries of Theorems \ref{thm-3} and \ref{thm-4}.

\begin{corollary}\label{cor-1}
If $L$ is a preserver of Schur stability, then $L$ preserves nilpotent matrices.
\end{corollary}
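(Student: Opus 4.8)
The plan is to recast nilpotency as a statement about the spectral radius and then invoke the monotonicity established in the proof of Theorem \ref{thm-3}. First I would record the elementary observation that a matrix $A \in M_n(\reals)$ is nilpotent if and only if every eigenvalue of $A$ vanishes, which is precisely the condition $\rho(A) = 0$. One direction is immediate; for the other, if $\rho(A) = 0$ then all eigenvalues are zero, and the Cayley--Hamilton theorem (or the Jordan form) forces $A^n = 0$. Thus ``$L$ preserves nilpotents'' is equivalent to ``$\rho(A) = 0 \implies \rho(L(A)) = 0$.''

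Next I would extract the pointwise inequality hidden inside the proof of Theorem \ref{thm-3}. That argument does not merely conclude $\rho(L) \leq 1$; it first establishes the stronger fact that $\rho(L(A)) \leq \rho(A)$ for every $A \in M_n(\reals)$ whenever $L$ preserves Schur stability (the bound on $\rho(L)$ is deduced afterward from this domination). Granting this, the corollary is immediate: if $A$ is nilpotent, then $\rho(A) = 0$, so $\rho(L(A)) \leq \rho(A) = 0$, whence $\rho(L(A)) = 0$ and $L(A)$ is nilpotent by the characterization above.

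There is no genuine obstacle here; the only point meriting attention is that the relevant input from Theorem \ref{thm-3} is the pointwise estimate $\rho(L(A)) \leq \rho(A)$ rather than the coarser statement $\rho(L) \leq 1$. For an onto preserver, Theorem \ref{thm-4} sharpens this inequality to the equality $\rho(L(A)) = \rho(A)$, so in that setting $L$ not only sends nilpotents to nilpotents but does so bijectively (a matrix is mapped to a nilpotent if and only if it was nilpotent to begin with); the conclusion of the corollary therefore holds a fortiori.
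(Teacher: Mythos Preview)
Your argument is correct and matches the paper's proof exactly: the paper simply notes that the corollary follows from the inequality $\rho(L(A)) \leq \rho(A)$ for each $A$, established in the proof of Theorem \ref{thm-3}. Your additional explanation of why nilpotency is equivalent to $\rho(A)=0$ and your remark on the onto case are fine elaborations, but the core reasoning is identical.
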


\begin{proof}
The proof follows as $L$ satisfies $\rho(L(A)) \leq \rho(A)$ for each $A$.
\end{proof}

\begin{corollary}\label{cor-2}
A map $L$ defined on $\mathcal{S}^n$ is an onto preserver of Schur stability if and only if $L(A) = c RAR^t$ for some 
orthogonal matrix $R$ and $c \in \{1,-1\}$. 
\end{corollary}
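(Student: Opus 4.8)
The plan is to prove the two implications separately, with essentially all the work in the forward direction. The converse is immediate: if $L(A) = cRAR^{t}$ with $R$ orthogonal and $c \in \{1,-1\}$, then $L(A)$ is orthogonally similar to $cA$, so its eigenvalues are those of $A$ multiplied by $\pm 1$ and hence have the same moduli. Thus $A$ is Schur stable if and only if $L(A)$ is, and since the inverse map $A \mapsto cR^{t}AR$ has the same form, $L$ is an onto preserver. For the forward direction, let $L$ be an onto preserver of Schur stability on $\mathcal{S}^{n}$. By Lemma \ref{lem--1}, $\mathcal{S}^{n}$ has a basis of Schur stable matrices, so $L$ is invertible and the argument of Theorem \ref{thm-4} applies verbatim to give $\rho(L(A)) = \rho(A)$ for all $A \in \mathcal{S}^{n}$. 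Every element of $\mathcal{S}^{n}$ is normal, hence normaloid (Definition \ref{defn-2}), so $\rho = \|\cdot\|$ on $\mathcal{S}^{n}$; therefore $\|L(A)\| = \|A\|$ for all $A$, i.e. $L$ is a surjective linear isometry of $(\mathcal{S}^{n}, \|\cdot\|)$.

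The heart of the proof is to convert this isometry into a congruence, and the first step I would take is to locate the identity. The extreme points of the unit ball $\{A \in \mathcal{S}^{n} : \|A\| \le 1\}$ are exactly the symmetric involutions $J = 2P - I$ with $P$ an orthogonal projection. A one-line computation gives $\|I - J\| = \|2(I-P)\| = 2$ for every such $J \ne I$, while each $J$ with $0 < \mathrm{rank}(P) < n$ has extreme points arbitrarily near it (perturb $P$ among projections of the same rank). Hence $I$ and $-I$ are precisely the isolated extreme points. Since $L$ and $L^{-1}$ carry extreme points to extreme points and are continuous, $L$ restricts to a homeomorphism of the extreme point set, so $\{L(I), L(-I)\} = \{I, -I\}$; by linearity $L(-I) = -L(I)$, and after replacing $L$ by $cL$ for a suitable $c \in \{1,-1\}$ I may assume $L(I) = I$.

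With $L(I) = I$ in hand I would recover the order structure. For every $t \in \reals$ we have $\|L(A) - tI\| = \|L(A - tI)\| = \|A - tI\| = \max_i |\lambda_i(A) - t|$, and reading off the two asymptotic slopes of this piecewise linear function of $t$ shows that $\lambda_{\max}(L(A)) = \lambda_{\max}(A)$ and $\lambda_{\min}(L(A)) = \lambda_{\min}(A)$. Consequently $A \succeq 0 \iff \lambda_{\min}(A) \ge 0 \iff L(A) \succeq 0$, so $L$ is a linear automorphism of the cone $\mathcal{S}^{n}_{+}$ of positive semidefinite matrices. By the classical description of the automorphisms of this cone, $L(A) = SAS^{t}$ for some invertible $S$; substituting $L(I) = I$ gives $SS^{t} = I$, so $S =: R$ is orthogonal. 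Undoing the earlier sign change yields $L(A) = cRAR^{t}$, as claimed.

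I expect the main obstacle to be this isometry-to-form passage, and within it the two structural inputs: the identification of $\pm I$ as the only isolated extreme points (which pins down $L(I)$), and the classification of the linear automorphisms of $\mathcal{S}^{n}_{+}$ as congruences. If one prefers not to quote the latter, the eigenvalue data from the previous step can instead be bootstrapped, since $L$ preserves $\lambda_{\max}$ and $\lambda_{\min}$ on every translate $A - tI$ and hence the full spectrum together with the associated spectral projections; but the cone-automorphism route is the most economical. The case $n = 1$ is trivial, and the extreme-point argument genuinely uses $n \ge 2$.
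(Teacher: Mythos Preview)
Your proposal is correct and the reduction to an operator-norm isometry on $\mathcal{S}^n$ via Theorem~\ref{thm-4} matches the paper exactly; the difference is in how you pass from ``$L$ is an isometry'' to the form $cRAR^t$. The paper simply invokes (a real modification of) Theorem~2 of \cite{lt-2} on linear preservers of unitary-similarity invariant norms to conclude directly. You instead give a geometric argument: identify the extreme points of the unit ball as the symmetric involutions, single out $\pm I$ as the only isolated ones (for $n\ge 2$), deduce $L(I)=\pm I$, then use $\|L(A)-tI\|=\|A-tI\|$ to recover $\lambda_{\max}$ and $\lambda_{\min}$ and hence the positive semidefinite cone, and finally appeal to the classical description of linear automorphisms of $\mathcal{S}^n_+$ as congruences. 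Both routes rely on one external structural theorem (Li--Tsing for the paper, the cone-automorphism theorem for you); the paper's is shorter, while yours is more transparent about the geometry and makes the role of $L(I)$ explicit, which is pleasant. The converse direction is handled the same way in both.
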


\begin{proof}
For the \textit{only if} part, observe that from Theorem \ref{thm-4}, we know that $\rho(A) = \rho(L(A))$ for all 
$A \in \mathcal{S}^n$. It is easy to prove that for any symmetric matrix, $\rho(A) = ||A||$ (the operator norm of $A$). 
Thus, $L$ is an isometry with respect to the operator norm. Recall that the operator norm is unitary congruence invariant; 
that is, for any $A \in \mathcal{S}^n$ and any orthogonal matrix $U$, we have $||UAU^t|| = ||A||$. Moreover, it is not 
induced by an inner product. It follows from (a suitable modification of) Theorem 2 of \cite{lt-2} that $L(A) = c RAR^t$ 
for some orthogonal matrix $R$ and $c \in \{1,-1\}$.

For the \textit{if} part, suppose $L(A) = c RAR^t$ 
for some orthogonal matrix $R$ and $c \in \{1,-1\}$. Then, $\rho(c RAR^t) = \rho(RAR^t) = \rho(R^tRA) 
\leq ||R^tRA|| = ||A|| = \rho(A)$. This proves that $L$ is an into preserver of Schur stability. Note that 
$L^{-1}(A) = c R^tAR$. A similar calculation proves that $L^{-1}$ will be an into preserver of Schur stability. 
This shows that $L$ is an onto preserver of Schur stability, as $\mathcal{S}^n$ has a basis consisting of Schur 
stable matrices.
\end{proof}

\begin{remark}\label{rem-2}
A few remarks are in order.
\
\begin{enumerate}
\item It is possible for a Schur stability preserver $L$ to have a nilpotent eigenvector. For example, the invertible 
linear map $L: M_2(\reals) \rightarrow M_2(\reals)$ defined by $A = \begin{bmatrix*}[r]
                                                          a & b\\
                                                          c & d
                                                         \end{bmatrix*} \mapsto \begin{bmatrix*}[r]
                                                                                a & 2b\\
                                                                                \frac{c}{2} & d
                                                                               \end{bmatrix*}$ preserves both trace 
as well as determinant and so will preserve the spectrum. Consequently, it will also preserve Schur stability. It is 
easily seen that $L(N) = 2N$, where $N$ is the standard nilpotent matrix. 
\item Theorem \ref{thm-4} is also true for maps on $M_n(\complex)$, as $M_n(\complex)$ has a 
basis consisting of Schur stable matrices. Consequently, $L(A) = c TAT^{-1}$ or $L(A) = c TA^*T^{-1}$ 
for some invertible matrix $T$ for all $A \in M_n(\complex)$ and a nonzero complex number $c$ with $|c| = 1$. 
For a proof, one can refer to \cite{bs}. 
\item Notice that the map $A \mapsto c TAT^{-1}$ with $|c| = 1$, is always an onto preserver of Schur stability. 
Conversely, we know from Theorem \ref{thm-4} that onto preservers of Schur stability preserve the spectral radius and 
is therefore of the form $A \mapsto c TAT^{-1}, |c| = 1$ for some invertible matrix $T$. We have therefore characterized 
onto preservers of Schur stability on $M_n(\reals)$.
\item If $L$ is an onto preserver of nilpotent matrices on $M_n(\reals)$, 
then $L(A) = c TAT^{-1} - \frac{c}{n} trace(a_{ij} I + \frac{1}{n} \phi(trace (a_{ij} I))$ or $L(A) = c TA^tT^{-1} - 
\frac{c}{n} trace(a_{ij}^t I + \frac{1}{n} \phi(trace (a_{ij}^t I))$ for some nonzero scalar $c$, an 
invertible matrix $T$ and an additive map $\phi: \reals I \longrightarrow M_n(\reals)$.
\item If we are restricting to only the subspace of trace zero matrices, $sl_n(\reals)$, then any onto preserver 
of nilpotent matrices will be of the form $L(A) = cTAT^{-1}$ or $L(A) = cTA^tT^{-1}$, with $0 \neq c \in \reals$. 
One can refer to Theorem 2.3 and Corollary 2.5 of \cite{bh} for a proof of the above statements. Therefore, when 
$c = |c| = 1$, any onto preserver of nilpotent matrices will always be an onto preserver of Schur stability in 
$sl_n(\reals)$.
\end{enumerate}
\end{remark}

We have thus characterized onto preservers of Schur stability both on $\mathcal{S}^n$ as well as on $M_n(\reals)$. 
Over $\mathcal{S}^n$, such maps will be of the form $L(A) = c RAR^t$ for some orthogonal matrix $R$ and $c \in \{1,-1\}$, 
whereas over $M_n(\reals)$, they will be of the form $L(A) = c TAT^{-1}$ or $L(A) = c TA^tT^{-1}$ for some invertible 
matrix $T$ and $|c| = 1$. Notice that for the \textit{if} part of Corollary \ref{cor-2}, it suffices to assume that $L$ 
preserves the spectral radius function. Then, it preserves the operator norm $||.||$, which is unitarily invariant. 
It will then follow from (a suitable modification of) Theorem 2 of \cite{lt-2} that $L(A) =  c UAU^t$ for some real 
orthogonal matrix $U$ and $c \in \{1,-1\}$. 

From the Theorems and Examples presented thus far, we know that an invertible map $L$ on $M_n(\reals)$ 
with $\rho(L) = 1$ need not preserve Schur stable matrices. Therefore, the natural question is whether additional 
conditions on $L$ will ensure such a result. We do not know the complete answer to this question. The following can 
be proved: If $L$ is an invertible normal map on $\mathcal{S}^n$ such that $\rho(L) = \rho(L^{-1}) = 1$, then 
$L(A) = \pm RAR^t$ for some orthogonal matrix $R$ and so will be an onto preserver of Schur stability. This is 
because the conditions on $L$ will imply that $L$ is an isometry. A similar statement can be made for maps on 
$M_n(\reals)$, except the map will be of the form $UAV$ for orthogonal matrices $U$ and $V$ (see Theorem 4.1, 
\cite{lt-1} for a proof).

\subsection*{Examples}\hspace*{\fill} \\

We now present examples that illustrate possible cases and also substantiate our results.

\begin{example}\label{eg-2.0}
Consider $M = \begin{bmatrix*}[r]
               0.5 & 0 & 10\\
               0 & 0.5 & 0\\
               0 & 5 & 0.5
              \end{bmatrix*}$ and $N = I$. Then, $MN$ is invertible and it can be verified that $\rho(MN) \leq 1$. 
Now consider the matrix $A = \begin{bmatrix*}[r]
                              0.5 & 0 & 0\\
                              0 & 0 & 1\\
                              0 & 0 & 0
                             \end{bmatrix*}$. It can be easily computed that $||A|| = 1$ and $w(A) = \rho(A) = 1/2$. 
Therefore $A$ is spectraloid but not normaloid. One now easily varifies that $MAN$ is not Schur stable. 
\end{example}

The above example shows that the class of normaloid Schur stable stable matrices is at present the maximal class for 
which we have an into preserver. Recall that when $n = 2$, a matrix $A$ is spectraloid if and only if it is normal 
(and hence normaloid). The following example shows thtat even if $MN$ is Schur stable, the map $A \mapsto MAN$ 
need not preserve Schur stability.

\begin{example}\label{eg-2.1}
Let $A = \begin{bmatrix*}[r]
          1.17258 & 1.35575\\
          -0.94256 & -0.39761
         \end{bmatrix*}, \ M = \begin{bmatrix*}[r]
			   0.79323 & 0\\
			   0 & -0.24866
			   \end{bmatrix*}$ and $N = I$. Then, $A$ and $M$ are Schur stable, $\rho(MN) = \rho(M) < 1$. 
However, $MAN$ is not Schur stable, as its spectral radius is $1.1626$. Note that $A$ is not normaloid, as its 
norm is $2.0245$, whereas its spectral radius is $0.90091$. 
\end{example}

There are also matrices $M$ and $N$ with one or both being non-symmetric, $\rho(MN) \leq 1$, but the map 
$A \mapsto MAN$ fails to preserve Schur stability. The followig example illustrates this.

\begin{example}\label{eg-2.2}
Let $M = \begin{bmatrix*}[r]
          1 & -1\\
          0 & -1
         \end{bmatrix*}, N = \begin{bmatrix*}[r]
                             1 & 0\\
                             0 & 1
                             \end{bmatrix*}$. Then, $M$ is not symmetric, whereas $N$ is. $MN = M$. 
Therefore, $\rho(MN) = 1$. Let $A = \begin{bmatrix*}[r]
                                     0.5 & 0\\
                                     10 & -0.5
                                    \end{bmatrix*}$. $A$ is Schur stable. However, $MAN = \begin{bmatrix*}[r]
											  49.5 & 2.5\\
											  -10 & 0.5
											  \end{bmatrix*}$ 
is not Schur stable, as $\rho(MAN) = \rho(MA) = 48.99$. Notice that $M$ is not diagonalizable, whereas $N$ is.
\end{example}

As the next example illustrates, even when both $M$ and $N$ are symmetric with $\rho(MN) \leq 1$, it may happen 
that the map $A \mapsto MAN$ fails to preserve Schur stability.

\begin{example}\label{eg-2.3}
Let $M = \begin{bmatrix*}[r]
          -1 & -0.5\\
          0.5 & -1
         \end{bmatrix*}, N = (2/3)I$. Both $M$ and $N$ are symmetric and $\rho(MN) = (2/3)\rho(M) = 1$. 
Let $A = \begin{bmatrix*}[r]
         0.5 & 100\\
          0 & -0.5
         \end{bmatrix*}$, which is Schur stable. However, $MAN = (2/3)MA = (2/3) \begin{bmatrix*}[r]
									        -0.5 & -100.25\\
									         0.25 & 50.5
										\end{bmatrix*}$ 
is not Schur stable, as its spectral radius is $33.33$. 
\end{example}

\begin{example}\label{eg-2.4}
Consider the following map $L$ on $M_2(\reals)$ defined by $\begin{bmatrix*}[r]
                                                                   a & b\\
                                                                   c & d
                                                            \end{bmatrix*} \mapsto \begin{bmatrix*}[r]
										   a & b\\
										   2d & -2c
										   \end{bmatrix*}$. Take 
$L^{\prime} = (1/4)L$, which is clearly normal. Moreover, $\rho(L^{\prime}) = 1/2 < 1$. Now consider the 
matrix $A = \begin{bmatrix*}[r]
             0.75 & 5\\
             0 & -0.75
            \end{bmatrix*}$, which is Schur stable. $L^{\prime}(A) = \begin{bmatrix*}[r]
                                                                     0.75 & 5\\
                                                                     -1.5 & 0
                                                                     \end{bmatrix*}$, which can be easily seen to 
have spectral radius bigger than $1$. Note that although the map $L^{\prime}$ is normal and has spectral radius less 
than $1$, the matrix $A$ is not normaloid (but Schur stable though).
\end{example}

It follows from the above examples that linear preservers of Schur stable matrices is properly contained in the 
class of those linear maps whose spectral radius is at most $1$.

The following are examples of singular linear maps on $M_2(\reals)$ which do not preserve Schur stability. The 
first one is non-diagonalizable, but contractive and has spectral radius less than $1$, whereas the second one is 
neither diagonalizable nor contractive, but has spectral radius less than $1$.

\begin{example}\label{eg-2.5}
Consider the linear map $L$ on $M_2(\reals)$ defined by $\begin{bmatrix}
                                                              a & b\\
                                                              c & d
                                                              \end{bmatrix} \mapsto \begin{bmatrix}
                                                                                     b & c\\
                                                                                     0 & 0
                                                                                    \end{bmatrix}$. 
In this case, $\rho(L) = 0, \ ||L|| = 1$. However, $L$ is neither diagonalizable nor does it preserve 
Schur stability.
\end{example}

\begin{example}\label{eg-2.6}
Let $L : M_2(\reals) \longrightarrow M_2(\reals)$ be given by $\begin{bmatrix}
                                                                a & b\\
                                                                c & d
                                                                \end{bmatrix} \mapsto \begin{bmatrix}
										       b & 2c\\
										       0 & 0
										       \end{bmatrix}$. 
Then $\rho(L) = 0 < 1$. Consider the matrix $A = \begin{bmatrix}
                                                  1/2 & 2\\
                                                  0 & 1/2
                                                 \end{bmatrix}$, which is Schur stable. 
$L(A) = \begin{bmatrix}
         2 & 0\\
         0 & 0
        \end{bmatrix}$ is not Schur stable. The operator norm of above linear map $L$ is $2$ and moreover 
$L$ is not diagonalizable. This implies there are linear maps that have spectral radius less than $1$ but are neither 
diagonalizable nor contractions and do not preserve Schur stability.
\end{example}

It follows from the above examples that linear preservers of Schur stable matrices is properly contained in the 
class of those linear maps whose spectral radius is at most $1$.

\section{Concluding Remarks}
We have studied in this paper the real version of a specific eigenvalue location preserver problem, namely, 
linear preservers of Schur stability on $M_n(\reals)$. Come of our results are also true for maps on $M_n(\complex)$ 
as well, as the proofs indicate. As pointed out in \cite{jlrsp}, eigenvalue location 
preserver problems for real matrices is considerably hard and may require different methods than those that 
are commonly used in the complex case, as the real field is not algebraically closed. 
Although a characterization of into preservers of Schur stable matrices is elusive to us at this point, 
we have proved that if a map $L$ preserves Schur stable matrices, then its spectral radius is necessarily at most 
$1$ in the case of into preservers and equal to $1$ in the case of onto preservers. Onto preservers of Schur stability 
on $M_n(\reals)$ as well on $\mathcal{S}^n$ are completely characterized. Several other examples are also provided 
to substantiate our results. 

\vspace{.25cm}
\noindent
\textbf{Acknowledgements:} The first author thanks the SERB, Government of India, for partial support in the form of a 
grant (Grant No. ECR/2017/000078).The authors also thank the Kerala School of Mathematics, Kozhikode, India, for providing 
necessary support to carry out parts of this work. 

\bibliographystyle{amsplain}
\begin{center}

\end{center}
\end{document}